\documentclass[11pt,leqno]{article}%
\usepackage{amssymb,bbm,amsmath,amsfonts,amsthm,
amscd}
\usepackage{vmargin}
\usepackage{asymptote}
\usepackage[dvipsnames]{xcolor}
\usepackage{
 array}
 \usepackage[utf8]{inputenc}

\usepackage{xspace}
\usepackage{wrapfig}
\usepackage{graphicx}
\usepackage{float}
\usepackage{subcaption}
 \usepackage{mathrsfs}
\usepackage{macros}

\theoremstyle{remark}

\usepackage[toc,page]{appendix}

\usepackage{enumerate}

\usepackage{color}

\usepackage{xcolor}

\usepackage[linktocpage,colorlinks=true]{hyperref}

\hypersetup{urlcolor=blue, citecolor=red, linkcolor=blue}

\usepackage{authblk}

\parindent=0pt

\makeatletter

\@addtoreset{equation}{section}

\date{ }

\begin{document}

\title{Continuous boundary condition at the interface for two coupled fluids}

\author[1]{François Legeais\footnote{Corresponding author}}
\author[2]{Roger Lewandowski}

\affil[1,2]{IRMAR, UMR CNRS 6625, University of Rennes 1 and ODYSSEY Team, INRIA
  Rennes, France\\
E-mail: Roger.Lewandowski@univ-rennes1.fr, francois.legeais@univ-rennes1.fr}

\maketitle

\begin{abstract} We consider two laminar incompressible flows coupled by the continuous law at a fixed interface $\Ga_I$. We approach the system by one that satisfies a friction Navier law at $\Ga_I$, and we show that when the friction coefficient 
goes to $\infty$, the solutions converges to a solution of the initial system. We then write a numerical Schwarz-like coupling algorithm 
and run 2D-simulations, that yields same convergence result. 
\end{abstract}
MCS Classification: 	76D07, 35J20, 65N30
\smallskip

Key-words: Stokes equations, coupled problems, variational formulation, numerical simulations. 

\section{Introduction}
We consider the two coupled fluids problem with a rigid lid assumption, given by two 3D stokes equations, 
\begin{eqnarray} \label{eq:Pb1} &&-\nu_i \Delta \uv_i +\nabla p_i = \fv_i , \quad 
\nabla \cdot \uv_i =0, \\
&&\label{eq:CL1} \uv_{1,h | \Gamma_{ I }}  = \uv_{2,h | \Gamma_{ I }}, \quad 
w_{i |\Gamma_I}=\uv_i \cdot \nv_{i | \Gamma_{ I }}=0, \\
&& \label{eq:CL2} \uv_{1 |\Gamma_1}= \uv_{2 |\Gamma_2}=0, 
\end{eqnarray}
for $i=1, 2$, where the velocities $(\uv_1, \uv_2) = (\uv_1(\x_h, z_1) , \uv_2 (\x_h, z_2)) $ are decomposed as 
$\uv_i = (\uv_{i,h}, w)$, $\uv_{i, h} = (u_{i, x}, u_{i, y})$. Moreover, $\x_h \in \mathbb{T}_2$, where $\mathbb{T}_2$ is the two dimensional torus, which means that we consider horizontal periodic boundary conditions. The interface $\Gamma_{ I }$ is given by 
$\Gamma_{ I } = \{ (\x_h, 0), \x_h \in \mathbb{T}_2 \} $, the boundaries $\Ga_i$ are given by 
$\Gamma_{1}= \{ (\x_h, z_1^+), \x_h \in \mathbb{T}_2 \}$, $\Gamma_{2 } = \{ (\x_h, z_2^-), \x_h \in \mathbb{T}_2 \} $,  
$z_1 \in J_1= [0,  z_1^+]$, $z_2 \in J_2 = [z_2^-, 0]$, where $z_1^+ >0$ and $z_2^- <0$. The coefficient $\nu_i >0$ is the viscosity of the fluid $i$, $p_i$ its pressure. 

The main characteristic of this problem is the continuity boundary condition \eqref{eq:CL1}, which is natural and physical  \cite{MR1744638}, and usually considered for 
free interfaces \cite{MR3035985}. Notice that the rigid lid assumption we consider is reasonable for laminar coupled flows, as well as for large scales. 
In this paper we adress the question of the existence and uniqueness of a weak solution to Problem \eqref{eq:Pb1}-\eqref{eq:CL1}-\eqref{eq:CL2}, given as the limit of "frictional solutions", for which we can write a numerical Schwarz-like  algorithm. More specifically, we approach this problem by the following problem  
\begin{eqnarray} \label{eq:Pb2} &&-\nu_i \Delta \uv_i +\nabla p_i = \fv_i , \quad 
\nabla \cdot \uv_i =0, \\
&&\label{eq:CLL1} \nu_i \frac{\p \uv_{i,h}}{\p \nv_i}\vert _{\Gamma_{ I }}= -\alpha (\uv_{i,h}-\uv_{j,h}), \quad 
w_{i |\Gamma_I}=\uv_i \cdot \nv_{i | \Gamma_{ I }}=0, \\
&& \label{eq:CLL2} \uv_{i |\Gamma_{i}}=0, 
\end{eqnarray}
$i, j =1,2$,  in which the continuity condition \eqref{eq:CL1} is replaced by the Navier law \eqref{eq:CLL1} where $i \not=j$. Similar problems have been already studied before, see 
\cite{MR1935990, MR2813910, MR2576510, MR1325825}, and the existence and uniqueness of a weak solution is guaranteed. We aim to investigate how Problem \eqref{eq:Pb2}-\eqref{eq:CLL1}-\eqref{eq:CLL2} approaches 
Problem \eqref{eq:Pb1}-\eqref{eq:CL1}-\eqref{eq:CL2} when the friction coefficient $\alpha$ goes to infinity. Such question  has already been adressed  in 
\cite{MR4231512} for a single fluid, where it is proved that the corresponding solution strongly converges to a solution to the corresponding Stokes (Navier-Stokes) 
equations with a no slip boundary condition when $\alpha \to \infty$.  We show in this paper the  convergence in $H^1$ space type of the solution of \eqref{eq:Pb2}-\eqref{eq:CLL1}-\eqref{eq:CLL2} to a solution of \eqref{eq:Pb1}-\eqref{eq:CL1}-\eqref{eq:CL2} (see Theorem \ref{th2.1}). 
\newline
As we shall see, numerical simulations are easily carried out by \eqref{eq:Pb2}-\eqref{eq:CLL1}-\eqref{eq:CLL2} thanks to a Schwarz-like coupling algorithm, that does not work for \eqref{eq:Pb1}-\eqref{eq:CL1}-\eqref{eq:CL2}. This method has already been successfully implemented for coupled problems, see for example \cite{Blay}.

The note is organized as follows. In the first part we set the functional framework and then we prove the convergence result, namely Theorem \ref{th2.1}. In the second part, we describe our algorithm and show some numerical results in the 2D case. In particular we check the numerical convergence of the algorithm.

\section{Convergence analysis}

\subsection{ Energy balance}

This section is devoted to the derivation of the main a priori estimate, which is standard. Let $(\uv_1, \uv_2)$ be any enough smooth solution to Problem 
\eqref{eq:Pb2}-\eqref{eq:CLL1}-\eqref{eq:CLL2}. Taking the scalar product of  equation \eqref{eq:Pb2}$_i$ by $\uv_i$ in integrating over $\mathbb{T}_2 \times J_i$ over yields by \eqref{eq:CLL2}$_i$, because of the periodic boundary conditions in the $x-y$ axes, the incompressibility condition and 
$\uv_i \cdot \nv_i = 0$ at $\Gamma_{ I }$, 
\begin{equation*}
    \nu_i \displaystyle \int_{\mathbb{T}_2 \times J_i} |\nabla \uv_i |^2 - \nu_i\displaystyle \int_{\Gamma_I} \frac{\p \uv_{i,h}}{\p \nv_i} = \displaystyle \int_{\mathbb{T}_2 \times J_i} \fv_i \cdot \uv_i,
\end{equation*}
giving by  \eqref{eq:CLL1}, 
$
    \nu_i \displaystyle \int_{\mathbb{T}_2 \times J_i} |\nabla \uv_i |^2 +\alpha \displaystyle \int_{\Gamma_I} \uv_{i,h}\cdot (\uv_{i,h}-\uv_{j,h}) = \displaystyle \int_{\mathbb{T}_2 \times J_i} \fv_i \cdot \uv_i.
$
Summing up the two equalities yields the following energy balance,  
\begin{equation}\label{eq:apriori} \begin{array}{l} 
    \nu_1 \displaystyle \int_{\mathbb{T}_2 \times J_1} |\nabla \uv_1 |^2 + \nu_2 \displaystyle \int_{\mathbb{T}_2 \times J_2} |\nabla \uv_2 |^2 +\alpha \displaystyle \int_{\Gamma_I}  |\uv_{1,h}-\uv_{2,h}|^2 = \\ 
 \hskip 8cm   \displaystyle \int_{\mathbb{T}_2 \times J_1} \fv_1 \cdot \uv_1+\displaystyle \int_{\mathbb{T}_2 \times J_2} \fv_2 \cdot \uv_2.\end{array} 
\end{equation}

\subsection{Functions spaces, variational formulation}\label{sec:func}
Let 
$
    {\cal W}_i=\{ \uv \in C^\infty (\mathbb{T}_2 \times J_i),~~\uv_{|\Gamma_i}=0,~~\uv \cdot \nv_i |_{\Gamma_I}=0,~~\nabla \cdot \uv_i =0 \},
$
equipped with 
$ || \uv ||_{i, 1} = || \g \uv ||_{L^2(\mathbb{T}_2 \times J_i)}$ which is indeed a norm due to the condition $\uv_{|\Gamma_i}=0$. Let $W_i$ denotes the completion 
of ${\cal W}_i$ with respect to this norm, 
\BEQ W = W_1 \times W_2, \quad W_0 = \{ (\uv_1, \uv_2 ) \in W, \, \, \uv_{1,h | \Gamma_I} = \uv_{2,h | \Gamma_I} \, a.e. \hbox{ in } \Gamma_I \} . \EEQ
We equip $W$ with the scalar product, for any ${\bf U} = (\uv_1,\uv_2), {\bf V} = (\vv_1, \vv_2) \in W$, 
\BEQ \Lambda ({\bf U}, {\bf V} ) =     \nu_1 \displaystyle \int_{\mathbb{T}_2 \times J_1} \nabla \uv_1 \cdot \nabla \vv_1 + \nu_2 \displaystyle \int_{\mathbb{T}_2 \times J_2} \nabla \uv_2 \cdot \nabla \vv_2.\EEQ
The space $W_0 $ is the kernel of the form $L : (\uv_1, \uv_2) \to \uv_{1,h | \Gamma_I}- \uv_{2,h | \Gamma_I}$, which is continuous by the trace theorem.  Therefore 
$W_0$ is a closed hyperplane of $W$. Let $P$ denotes the orthogonal projection over $W_0$, and ${\boldsymbol \Phi} = 
({\boldsymbol \phi}_1, {\boldsymbol \phi}_2)$ a unit orthogonal vector 
to $W_0$, so that $W_0 ^\perp = \vect {\boldsymbol \Phi}$. 

\begin{definition}(weak solution) A couple ${\bf U} = (\uv_1,\uv_2) \in W$ is a weak solution to Problem \eqref{eq:Pb2}-\eqref{eq:CLL1}-\eqref{eq:CLL2}  when $\forall \, {\bf V} = (\vv_1, \vv_2) \in W$,
\BEQ \label{eq:weak_sol} \Lambda ({\bf U}, {\bf V} ) + 
\alpha \displaystyle \int_{\Gamma_I}  (\uv_{1,h}-\uv_{2,h})\cdot (\vv_{1,h} -  \vv_{2,h} )= 
     \displaystyle \int_{\mathbb{T}_2 \times J_1} \fv_1 \cdot \vv_1+\displaystyle \int_{\mathbb{T}_2 \times J_2} \fv_2 \cdot \vv_2 = ({\bf F}, \Vv).
\EEQ
\end{definition}
Throughout the rest of the paper, we assume that $\fv_i \in  L^2(\mathbb{T}_2 \times J_i)$, $i=1,2$. 
The existence and the uniqueness of a weak solution to Problem \eqref{eq:Pb2}-\eqref{eq:CLL1}-\eqref{eq:CLL2} that satisfies the energy balance \eqref{eq:apriori} is straightforward by the Lax-Milgram Theorem for any given $\alpha >0$. Notice that work remains to be done about the pressures, by a suitable adaptation of a De Rham like theorem in this framework, which is an open problem. 

\subsection{Convergence}

Let ${\bf U}_\alpha = (\uv_1^{\alpha}, \uv_2^{\alpha}) \in W$ be the solution of \eqref{eq:Pb2}-\eqref{eq:CLL1}-\eqref{eq:CLL2}. 
We study in this section the convergence of the familly $( {\bf U}_\alpha)_{\alpha >0}$ when $\alpha \to \infty$, proving the following result. 

\begin{theorem}\label{th2.1} The familly $( {\bf U}_\alpha)_{\alpha >0}$ strongly converges in $W$ to a weak solution ${\bf U} = (\uv_1, \uv_2) \in W_0$ of Problem 
\eqref{eq:Pb1}-\eqref{eq:CL1}-\eqref{eq:CL2} when $\alpha \to \infty$, in the sense: 
\BEQ \label{eq:fv_wo} \forall \, {\bf V} = (\vv_1, \vv_2) \in W_0, \quad 
    \Lambda(\Uv,\Vv) =
  ({\bf F}, \Vv). 
\EEQ
Moreover, the solution of \eqref{eq:fv_wo}  is unique. 

\end{theorem}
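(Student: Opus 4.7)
My plan is to exploit the orthogonal decomposition $W = W_0 \oplus \mathbb{R}\boldsymbol{\Phi}$ set up in Section~\ref{sec:func}: writing $\mathbf{U}_\alpha = P\mathbf{U}_\alpha + c_\alpha \boldsymbol{\Phi}$ with $c_\alpha := \Lambda(\mathbf{U}_\alpha, \boldsymbol{\Phi}) \in \mathbb{R}$, I will identify both pieces by plugging well chosen test functions into \eqref{eq:weak_sol}. The natural candidate for the limit is the unique $\mathbf{U} \in W_0$ furnished by the Lax-Milgram theorem on $W_0$: $\Lambda$ is the ambient inner product and is therefore coercive on the closed subspace $W_0$, while $\mathbf{V} \mapsto (\mathbf{F}, \mathbf{V})$ is continuous by Cauchy-Schwarz together with Poincaré (via $\vv_{i|\Ga_i}=0$). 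This same Lax-Milgram argument also settles the uniqueness claim of the theorem.

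\emph{Step 1: $P\mathbf{U}_\alpha = \mathbf{U}$ for every $\alpha$.} Taking any $\mathbf{V} \in W_0$ in \eqref{eq:weak_sol} makes the Navier interface term vanish because $\vv_{1,h} = \vv_{2,h}$ on $\Ga_I$, leaving $\Lambda(\mathbf{U}_\alpha, \mathbf{V}) = (\mathbf{F}, \mathbf{V})$. Plugging in the orthogonal decomposition and using $\Lambda(\boldsymbol{\Phi}, \mathbf{V}) = 0$ for $\mathbf{V} \in W_0$, this reduces to $\Lambda(P\mathbf{U}_\alpha, \mathbf{V}) = (\mathbf{F}, \mathbf{V})$ for every $\mathbf{V} \in W_0$, and the uniqueness of the Lax-Milgram solution forces $P\mathbf{U}_\alpha = \mathbf{U}$ \emph{independently} of $\alpha$.

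\emph{Step 2: explicit formula for $c_\alpha$.} I then test \eqref{eq:weak_sol} against $\mathbf{V} = \boldsymbol{\Phi}$. The bilinear term collapses to $\Lambda(\mathbf{U}_\alpha, \boldsymbol{\Phi}) = c_\alpha$, using $\|\boldsymbol{\Phi}\|_\Lambda = 1$ and $P\mathbf{U}_\alpha \perp \boldsymbol{\Phi}$. Since $P\mathbf{U}_\alpha \in W_0$ has vanishing trace jump on $\Ga_I$, the interface data reads $\uv_{1,h}^{\alpha} - \uv_{2,h}^{\alpha} = c_\alpha (\boldsymbol{\phi}_{1,h} - \boldsymbol{\phi}_{2,h})$ there, which yields
\[
c_\alpha \bigl(1 + \alpha\, \|\boldsymbol{\phi}_{1,h} - \boldsymbol{\phi}_{2,h}\|_{L^2(\Ga_I)}^2\bigr) = (\mathbf{F}, \boldsymbol{\Phi}).
\]
Since $\boldsymbol{\Phi} \neq 0$ lies in $W_0^\perp$ while $W_0 = \ker L$, the trace jump $\boldsymbol{\phi}_{1,h} - \boldsymbol{\phi}_{2,h}$ cannot vanish in $L^2(\Ga_I)$, hence $c_\alpha = O(1/\alpha) \to 0$. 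As $\|\mathbf{U}_\alpha - \mathbf{U}\|_\Lambda = |c_\alpha|$ and the $\Lambda$-norm is equivalent to the $W$-norm, strong convergence in $W$ follows, with the quantitative rate $O(1/\alpha)$ as a bonus.

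The main delicate point I anticipate is justifying $\|\boldsymbol{\phi}_{1,h} - \boldsymbol{\phi}_{2,h}\|_{L^2(\Ga_I)} > 0$: this nondegeneracy is precisely what makes the denominator above grow with $\alpha$ and drives the convergence. In the codimension-one framework adopted in Section~\ref{sec:func} it follows at once from $\boldsymbol{\Phi} \neq 0$ together with $W_0 = \ker L$ and $W_0 \cap W_0^\perp = \{0\}$, but it tacitly relies on $L$ being injective on $W_0^\perp$.
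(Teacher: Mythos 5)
Your proof is correct, and it takes a genuinely more direct route than the paper's. The paper proceeds by compactness: the energy balance gives a uniform bound, a subsequence of $\Uv^\alpha$ converges weakly, trace compactness and the vanishing of $\alpha\int_{\Ga_I}|\uv_{1,h}^\alpha-\uv_{2,h}^\alpha|^2$ force the limit into $W_0$, the limit is identified via test functions in $W_0$, and strong convergence is then recovered in two steps (norm convergence of $P\Uv^\alpha$ from the identity $\|P\Uv^\alpha\|_W^2=({\bf F},P\Uv^\alpha)$, plus $\lambda_\alpha\to 0$ deduced from boundedness of $\alpha\lambda_\alpha^2$ in the energy balance). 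You instead observe that testing \eqref{eq:weak_sol} against $W_0$ and using $\Lambda(\boldsymbol\Phi,\Vv)=0$ pins down the projection exactly, $P\Uv_\alpha=\Uv$ for \emph{every} $\alpha$, with no subsequences or compactness at all; then testing against $\boldsymbol\Phi$ gives the closed-form relation $c_\alpha\bigl(1+\alpha\|\boldsymbol\phi_{1,h}-\boldsymbol\phi_{2,h}\|_{L^2(\Ga_I)}^2\bigr)=({\bf F},\boldsymbol\Phi)$, hence the quantitative rate $\|\Uv_\alpha-\Uv\|_W=O(1/\alpha)$, which the paper does not obtain. Both arguments lean equally on the structural claim of Section \ref{sec:func} that $W_0^\perp=\vect\boldsymbol\Phi$, and both need the nondegeneracy $\|\boldsymbol\phi_{1,h}-\boldsymbol\phi_{2,h}\|_{L^2(\Ga_I)}>0$: the paper uses it silently when passing from boundedness of $\alpha\lambda_\alpha^2\int_{\Ga_I}|\boldsymbol\phi_{1,h}-\boldsymbol\phi_{2,h}|^2$ to boundedness of $\alpha\lambda_\alpha^2$, whereas you make it explicit; your justification ($\boldsymbol\Phi\neq 0$, $\boldsymbol\Phi\perp W_0=\ker L$) is complete as stated, since injectivity of $L$ on $W_0^\perp$ is automatic from $W_0\cap W_0^\perp=\{0\}$, so the caveat in your last sentence is not actually a gap. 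Your Lax--Milgram (in fact Riesz) construction of $\Uv$ on $W_0$ also covers the uniqueness claim exactly as the paper intends.
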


\begin{proof} 
Let $\Uv^{\alpha}=(\uv_1^{\alpha}, \uv_2^{\alpha}) \in W=  W_1 \times W_2$ be the solution of $(S_1,S_2)$. We first  show that the familly $(\Uv^{\alpha})_{\alpha>0}$ is bounded in $W$. 
We have, by \eqref{eq:apriori},
\BEQ  \label{eq:apriori2} \|\Uv^{\alpha}\|_{W}^2 +\alpha \displaystyle \int_{\Gamma_I}  |\uv_{1,h}^{\alpha}-\uv_{2,h}^{\alpha}|^2  = 
\displaystyle \int_{\mathbb{T}_2 \times J_1} \fv_1 \cdot \uv_1^{\alpha}+\displaystyle \int_{\mathbb{T}_2 \times J_2} \fv_2 \cdot \uv_2^{\alpha},
\EEQ  which yields 
\begin{equation}
    \|\Uv^{\alpha}\|_{W}^2   \le \displaystyle \int_{\mathbb{T}_2 \times J_1} \fv_1 \cdot \uv_1^{\alpha}+\displaystyle \int_{\mathbb{T}_2 \times J_2} \fv_2 \cdot \uv_2^{\alpha}.
\end{equation}
We deduce from Poincaré and Cauchy-Schwarz inequalities that $(\Uv^\alpha)_{\alpha >0}$ is indeed bounded in $W$. Therefore,  we can extract a subsequence 
$(\Uv^{\alpha_n})_{n \in \N}$ ($\alpha_n \to \infty$ as $n \to \infty$) which converges weakly in $W$ to some $U \in W$. Moreover, by the trace theorem and usual Sobolev compactness results, the corresponding traces are strongly convergent in $L^2(\Gamma_I)$. As  by \eqref{eq:apriori2} 
$\lim_{n \to \infty} tr(\uv_{1, h}^{\alpha_n} - \uv_{2, h} ^{\alpha_n} ) = 0$ in $L^2(\Gamma_I)$, then $U \in W_0$. 
Finally, take  $\Vv \in W_0$ in \eqref{eq:weak_sol} as test, so that the boundary term vanishes. By passing to the limit in this case when $\alpha \to \infty$, we obtain that $U$ is a weak solution to \eqref{eq:Pb1}-\eqref{eq:CL1}-\eqref{eq:CL2}. Uniqueness is straightforward, which in addition garanties that the entire familly does converge to $\Uv$.

It remains to show the strong convergence.  Let $\lambda_{\alpha}\in \R$, be such that $\Uv^{\alpha}=P\Uv^\alpha+\lambda_{\alpha}\Phiv$ ($\Phiv$ being given in section \ref{sec:func}). We first show the strong convergence of $(P\Uv^{\alpha})_{\alpha >0}$ to $\Uv$ by taking $P\Uv^{\alpha}$ as a test  in \eqref{eq:weak_sol} which gives, by using the orthogonal decomposition of $\Uv^{\alpha}$,
\BEQ \label{eq:estim_proj}
    \Lambda(\Uv^{\alpha},P\Uv^{\alpha}) = \|P\Uv^{\alpha}\|_{W}^2 = 
 ({\bf F}, P \Uv^\alpha),
\EEQ
since the boundary  term on $\Gamma_I$ equals to zero by orthogonality. Therefore $(P\Uv^{\alpha})_{\alpha >0}$ is bounded in $W_0$, then converges weakly -{\sl up to a subsequence} (keeping the same notation)- to a limit ${\bf W}$,  strongly in $L^2((\mathbb{T}_2 \times J_1) \times (\mathbb{T}_2 \times J_2))$. Taking  $\Vv \in W_0$ in \eqref{eq:weak_sol} as test, noting that in this case
$\Lambda (\Uv^\alpha, \Vv) = \Lambda (P\Uv^\alpha, \Vv)$, and passing to the limit when $\alpha \to \infty$,  we see that ${\bf W}$
 is solution of the problem \eqref{eq:Pb1}-\eqref{eq:CL1}-\eqref{eq:CL2}, hence ${\bf W}=\Uv$ by uniqueness, and the entire sequence converges. Therefore, 
passing to the limit in \eqref{eq:estim_proj} yields 
$$\displaystyle \lim_{\alpha \to \infty} || P \Uv^\alpha ||^2_W =({\bf F}, \Uv) = \Lambda (\Uv, \Uv) = || \Uv ||_W^2,$$ 
which, together with the weak convergence, ensures the strong convergence as claimed. To conclude, it remains to prove that $\lim_{\alpha\to \infty } \lambda_\alpha = 0$. By the energy balance \eqref{eq:apriori2}, we have 
\BEQ \label{eq:limsup} \limsup_{\alpha \to \infty}  || \Uv^\alpha ||^2_W + \limsup_{\alpha \to \infty} \alpha \int_{\Ga_I} | \uv_{1, h}^\alpha - \uv_{2, h}^\alpha  |^2 = ({\bf F}, \Uv). 
\EEQ 
However, we have
$\int_{\Ga_I} | \uv_{1, h}^\alpha - \uv_{2, h}^\alpha  |^2 = \lambda^2_\alpha \int_{\Ga_I} | \boldsymbol \phi_{1, h}- \boldsymbol \phi_{1, h} |^2 $.  Therefore, since 
$(|| \Uv _\alpha ||^2_W)_{\alpha>0}$ is bounded,  by 
\eqref{eq:limsup} we have $(\alpha \lambda_\alpha^2)_{\alpha>0}$  is bounded, which can happen only if $\lambda_\alpha \to 0$ as $\alpha \to \infty$, concluding the proof. 
\end{proof}

\section{Numerical simulations}

\subsection{Algorithm}
We solve the problems  \eqref{eq:Pb2}-\eqref{eq:CLL1}-\eqref{eq:CLL2} for large values of $\alpha$,  with a coupling Schwarz like algorithm, using the software \href{http://www3.freefem.org/}{Freefem++}, for solving 2D Stokes problems by the finite element method. Our algorithm is set as follows. 
\newline
Step 1: We solve the problem on the upper part which gives a first value $\uv_1^{\alpha,0}$. 
\begin{eqnarray}
&& -\nu_1 \Delta \uv_1^{\alpha,0} + \nabla P_1^{(0)}=\fv_1,\quad 
 \nabla \cdot \uv_1^{\alpha,0} =0, \\
&& \nu_1  \frac{\p \uv_{1,h}^{\alpha,0}}{\p \nv_1}\vert _{\Gamma_{ I }}= -\alpha \uv_{1,h}^{\alpha,0},\\
&& \uv_{1,h}^{\alpha,0}\vert _{\Gamma_{ 1 }}=0,\quad 
 \uv_{1,h}^{\alpha,0} \cdot \nv_1=0.
\end{eqnarray}
This velocity allows us to solve the problem on the lower part, and to calculate the velocities step by step, up and down.

Step 2: We calculate $\uv_2^{\alpha,n}$ and  $\uv_1^{\alpha,n+1}$ by solving

 \begin{eqnarray}
&& -\nu_2 \Delta \uv_2^{\alpha,n} + \nabla P_2^{(n)}=\fv_2,\quad
 \nabla \cdot \uv_2^{\alpha,n} =0, \\
&& \nu_2  \frac{\p \uv_{2,h}^{\alpha,n}}{\p \nv_2}\vert _{\Gamma_{ I }}= -\alpha (\uv_{2,h}^{\alpha,n}-\uv_{1,h}^{\alpha,n}  ),\\
&& \uv_{2,h}^{\alpha,n}\vert _{\Gamma_{ 2 }}=0,\quad
\uv_{2,h}^{\alpha,n} \cdot \nv_2=0.
\end{eqnarray}
 and
\begin{eqnarray}
&& -\nu_1 \Delta \uv_1^{\alpha,n+1} + \nabla P_1^{(n+1)}=\fv_1, \quad  \nabla \cdot \uv_1^{\alpha,n+1} =0, \\
&& \nu_1  \frac{\p \uv_{1,h}^{\alpha,n+1}}{\p \nv_1}\vert _{\Gamma_{ I }}= -\alpha (\uv_{1,h}^{\alpha,n+1}-\uv_{2,h}^{\alpha,n}  ),\\
&& \uv_{1,h}^{\alpha,n+1}\vert _{\Gamma_{ 1 }}=0,\quad
 \uv_{1,h}^{\alpha,n+1} \cdot \nv_1=0.
\end{eqnarray}
Note that we are able to prove the stability of this algorithm, and numerical simulation confirm the convergence (see table below). 
Problem \eqref{eq:Pb1}-\eqref{eq:CL1}-\eqref{eq:CL2} cannot be solved in a similar way. Indeed, the interface conditions 
\begin{equation}
    \uv_{2,h}^{(n)}\vert _{\Gamma_{ I }}= \uv_{1,h}^{(n)}\vert _{\Gamma_{ I }},
\quad
    \uv_{1,h}^{(n+1)}\vert _{\Gamma_{ I }}= \uv_{2,h}^{(n)}\vert _{\Gamma_{ I }},
\end{equation}
imply that the sequences $(\uv_{1,h}^{(n)})_n$ and $(\uv_{2,h}^{(n)})_n$ are constant on the interface $\Gamma_{ I }$ which doesn't allow any iterations on the coupling algorithm.

\subsection{Simulation results}

We take $z_1^+=50$ , $z_2^-=-5$, $L=100$, $\nu_1=\nu_2=1$ and the source ${\bf F}= \left( \begin{pmatrix}
1 \\
-1 
\end{pmatrix},
\begin{pmatrix}
1 \\
-1 
\end{pmatrix}
\right)$ constant, for the simplicity.

\begin{figure}[h]
    \centering
\includegraphics[scale=0.5]{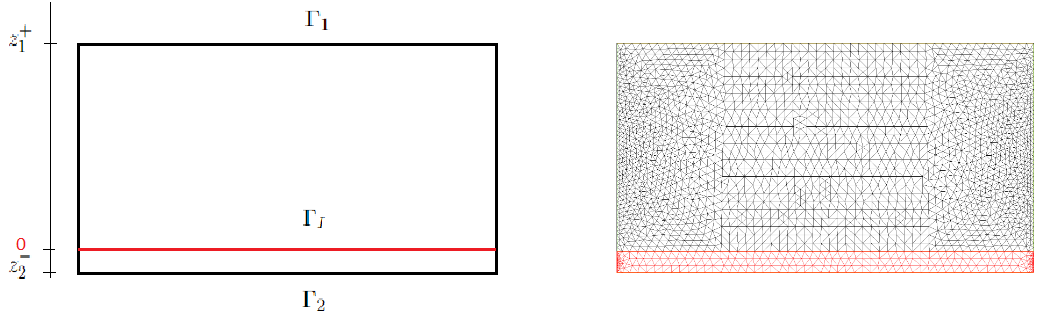}
\caption{Domain and mesh}
\end{figure}

 \begin{figure}[h!]
    \centering
    \begin{subfigure}[b]{0.4\textwidth}
        \includegraphics[scale=0.6]{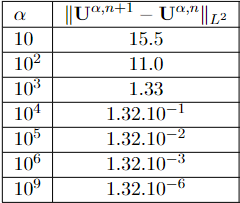} 
        
        \label{tab:error}
    \end{subfigure}
    ~
    \begin{subfigure}[b]{0.4\textwidth}
        \includegraphics[scale=0.4]{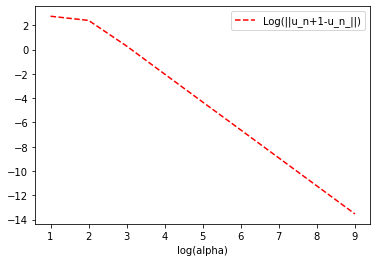} 
        
        \label{fig:log_error}
    \end{subfigure}
    \caption{L2 norm of the errors and rate of convergence for n=100}\label{fig:animals}
\end{figure}

 \begin{tabular}{|l | l | r|}
    \hline
    $\alpha$ & $n$   \\
    \hline
    $10$ & $4026$ \\
    \hline
    $10^2$ & $984$ \\
    \hline
    $10^3$ & $408$ \\
    \hline
    $10^4$ & $221$ \\
    \hline
    $10^5$ & $137$ \\
    \hline
    $10^6$ & $54$ \\
    \hline
    $10^9$ & $9$ \\
    \hline
\end{tabular}
 \begin{minipage}{.7\textwidth}%
 To check the numerical convergence of the method, we study the error term 
 $\|\Uv^{\alpha,n+1}-\Uv^{\alpha,n}\|_{L^2}$.
On the left, for a given $\alpha$, we have the first value of $n$ for which $\|\Uv^{\alpha,n+1}-\Uv^{\alpha,n}\|_{L^2}<10^{-3}$. The method is always converging, and the convergence is almost instantaneous for large $\alpha$ $(>10^6)$.

\end{minipage}%

 \begin{figure}[h!]
    \centering

        \includegraphics[scale=0.21]{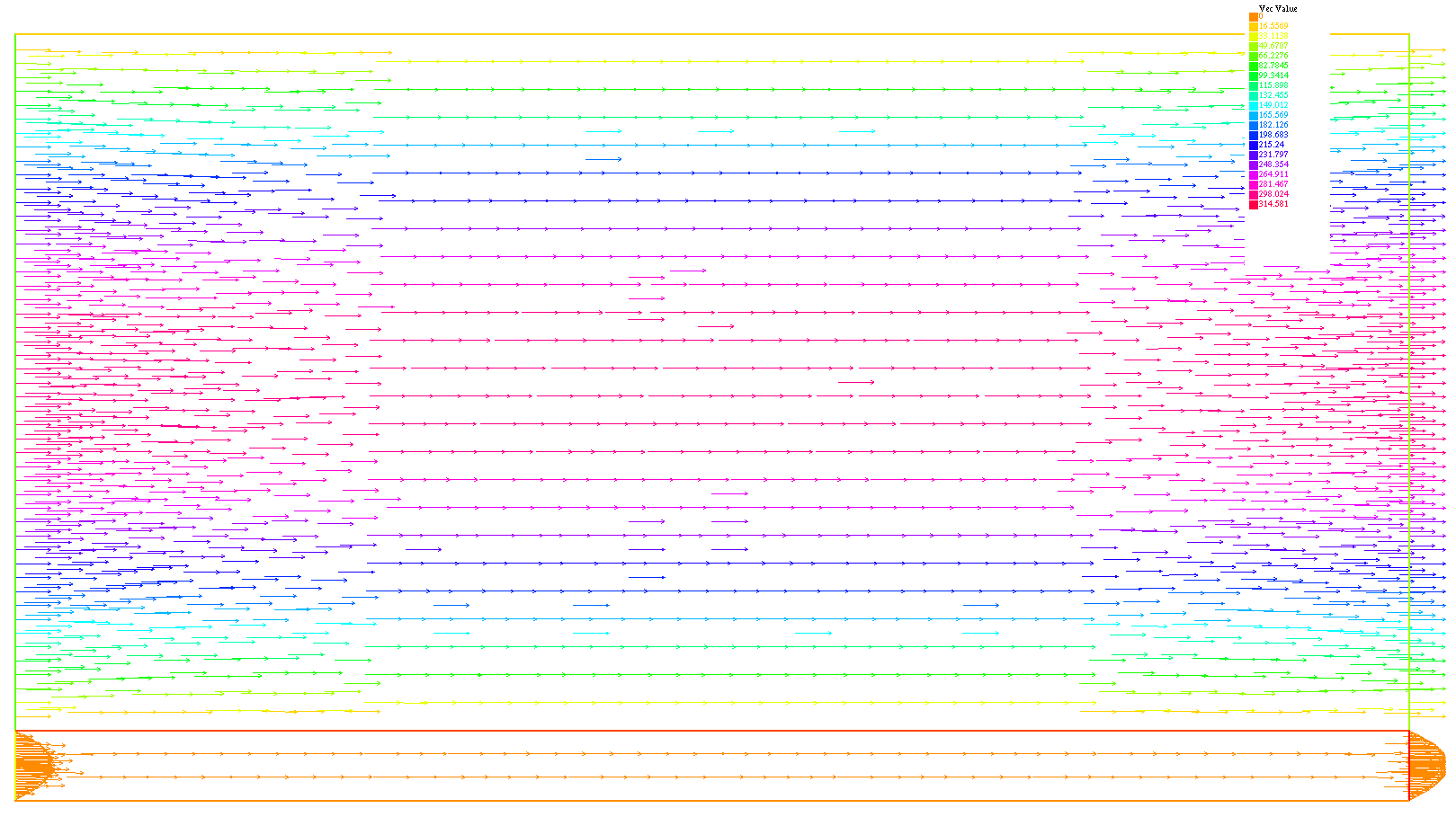}

    \caption{Velocities, $f_1=(1,-1)=f_2$, $n=9$,
$\alpha= 10^9$}
    \label{fig:press}
\end{figure}

 \bibliographystyle{plain}

\bibliography{Biblio}

\end{document}